\definecolor{identifiercolor}{rgb}{.4,.6,.56}
\definecolor{stringcolor}{gray}{0.5}
\definecolor{inactivecolor}{rgb}{0.15,0.15,0.5}
\title{A Categorification of Biquandle Brackets}
\author{Adu Vengal \\ \texttt{vengal.8@osu.edu} \and Vilas Winstein \\ \texttt{winstein.1@osu.edu}}
\date{April 2020}
\DeclareMathOperator{\ovtri}{\overline{\rhd}}
\DeclareMathOperator{\untri}{\underline{\rhd}}
\newcommand{\gdim}{\operatorname{gdim}}
\newcommand{\rank}{\operatorname{rank}}
\newcommand{\im}{\operatorname{im}}
\newcommand{\Bh}{\operatorname{Bh}}
\newcommand{\Kh}{\operatorname{Kh}}
\newtheorem{theorem}{Theorem}
\newtheorem{lemma}{Lemma}
\newtheorem{corollary}{Corollary}
\newtheorem{prop}{Proposition}
\theoremstyle{definition}
\newtheorem{definition}{Definition}
\newtheorem{example}{Example}
\newtheorem{remark}{Remark}
\def\semicolon{;}
\def\applytolist#1{
    \expandafter\def\csname multi#1\endcsname##1{
        \def\multiack{##1}\ifx\multiack\semicolon
            \def\next{\relax}
        \else
            \csname #1\endcsname{##1}
            \def\next{\csname multi#1\endcsname}
        \fi
        \next}
    \csname multi#1\endcsname}
\def\calc#1{\expandafter\def\csname c#1\endcsname{{\mathcal #1}}}
\def\bbc#1{\expandafter\def\csname bb#1\endcsname{{\mathbb #1}}}
\def\bfc#1{\expandafter\def\csname bf#1\endcsname{{\mathbf #1}}}
\def\sfc#1{\expandafter\def\csname s#1\endcsname{{\sf #1}}}
\def\fc#1{\expandafter\def\csname f#1\endcsname{{\mathfrak #1}}}
\title{Categorifying Biquandle Brackets}
\author{Adu Vengal \\ \texttt{vengal.8@osu.edu} \and Vilas Winstein \\ \texttt{winstein.1@osu.edu}}
\date{May 2020}
\begin{document}

\maketitle

\begin{abstract}
In their paper entitled ``Quantum Enhancements and Biquandle Brackets,'' Nelson, Orrison, and Rivera introduced \emph{biquandle brackets}, which are customized skein invariants for biquandle-colored links.
These invariants generalize the Jones polynomial, which is categorified by Khovanov homology.
At the end of their paper, Nelson, Orrison, and Rivera asked if the methods of Khovanov homology could be extended to obtain a categorification of biquandle brackets.

We outline herein a Khovanov homology-style construction that is an attempt to obtain such a categorification of biquandle brackets.
The resulting knot invariant generalizes Khovanov homology, but the biquandle bracket is not always recoverable, meaning the construction is not a true categorification of biquandle brackets.
However, the construction does lead to a definition that gives a ``canonical'' biquandle 2-cocycle associated to a biquandle bracket, which, to the authors' knowledge, was not previously known. Though this 2-cocycle is derived solely from its biquandle bracket, the two corresponding invariants are in general incomparable in strength.

We also provide Mathematica packages that can be used to do computations with biquandles, biquandle brackets, biquandle 2-cocycles, and, in particular, the canonical biquandle 2-cocycle associated to a biquandle bracket.
These Mathematica packages can be found at \href{http://www.vilas.us/biquandles/}{vilas.us/biquandles}.
\end{abstract}

\section{Introduction}
\label{sec_intro}

\emph{Khovanov homology} is a link invariant which categorifies the Jones polynomial.
The values of this invariant are not polynomials, but rather sequences of modules obtained from the cohomology of a certain cochain complex.
Khovanov homology categorifies the Jones polynomial because, when a particular quantity (the graded Euler characteristic) is measured from the sequence of modules in the value of the invariant, one recovers the Jones polynomial.
For more details and a construction of Khovanov homology, see \cite{khovanov2000categorification} or \cite{bar2002khovanov}.

\emph{Biquandles} are a type of algebraic structure whose axioms parallel the Reidemeister moves of knot theory.
Because of this, biquandles are the basis for many invariants of knots and links.
In particular, the \emph{biquandle counting invariant} is simply the number of ways to color a link diagram with elements of a biquandle so that relationships between colors at crossings are satisfied.
In \cite{nelson2017quantum}, Sam Nelson et. al. introduced an enhancement of the biquandle counting invariant, called the \emph{biquandle bracket}.
This is a type of skein relation depending on biquandle colorings.
A \emph{biquandle 2-cocycle} is another type of function on a biquandle that can be used to define a link invariant, arising from cohomology theory.

Biquandle brackets generalize the Jones polynomial in a natural way.
In \cite{nelson2017quantum}, Nelson et. al. asked whether a Khovanov homology-style categorification of the biquandle bracket is possible.
Herein, we provide a construction of what seems (to the authors) to be the most natural step from Khovanov homology toward a categorification of biquandle brackets.
The invariant we obtain generalizes Khovanov homology, but it is not a true categorification of all biquandle brackets: in some cases, the biquandle bracket cannot be recovered from our invariant via the graded Euler characteristic.

Nevertheless, the invariant does lead to a way of assigning a biquandle 2-cocycle to any given biquandle bracket, and the relationship and power of the invariant associated with this new biquandle 2-cocycle may be of interest.
To this end, we provide some Mathematica packages that can be used to do experimentations with biquandles, biquandle brackets, and biquandle 2-cocycles, including this new canonical biquandle 2-cocycle associated with a biquandle bracket.

This paper is structured as follows.\
In section \ref{chap_prelim}, we review definitions which we will require for our results, including the definition of biquandles, biquandle brackets, and biquandle 2-cocycles.
In section \ref{chap_construction}, we present the construction of our new invariant, which we call ``biquandle bracket homology,'' and we provide a proof of its invariance as well as an explanation of the canonical 2-cocycle associated with a biquandle bracket.
At the end of section \ref{chap_construction}, we also provide an example computation of the biquandle bracket homology which helps clarify the definitions and statements of the paper.
Section \ref{sec_questions} provides some questions for further inquiry.

This work has been done as a part of the undergraduate research program
\href{http://www.math.ohio-state.edu/~chmutov/wor-gr-su19/wor-gr.htm}{``\underline{Knots and Graphs}''}
at the Ohio State University, during the summer of 2019.
It is a continuation of work done in the same program during the summer of 2018 in \cite{hoffer2019structure}.
We are grateful to the OSU Honors Program Research Fund and to the  NSF-DMS \#1547357 RTG grant: Algebraic Topology and Its Applications for financial support.
In addition, we are grateful to our advisor, Sergei Chmutov, for his help.

\section{Preliminaries}
\label{chap_prelim}

To establish our notation and introduce the topics, we provide the following definitions.
We follow the notation and conventions in \cite{nelson2017quantum}.
These preliminaries can also be found in the previous work \cite{hoffer2019structure}.

\subsection{Biquandles}

\begin{definition}
\label{def_biquandle}
    A \emph{biquandle} is a set $X$ with two binary operations $\untri, \ovtri$ such that $\forall  x, y, z \in X$,
    
    \begin{enumerate}
        \item[(i)]
            $x \untri x = x \ovtri x$
        
        \item[(ii)]
            The maps $\alpha_y(x) = x \ovtri y,\, \beta_y(x) = x \untri y$, and $S(x,y) = (y \ovtri x, x \untri y)$ are invertible. 
        
        \item[(iii)]
            The following exchange laws are satisfied:
            \begin{align*}
                (x\untri y) \untri (z \untri y) &= (x\untri z) \untri (y \ovtri z) \\
                (x\untri y) \ovtri (z \untri y) &= (x \ovtri z) \untri (y \ovtri z) \\
                (x\ovtri y) \ovtri (z \ovtri y) &= (x\ovtri z) \ovtri (y \untri z).
            \end{align*}
    \end{enumerate}
    
    If $x \ovtri y = x$ for all $x, y \in X$, then $X$ is called a \emph{quandle}.
    When there is no danger of confusion, we will write the biquandle $(X,\ovtri,\untri)$ simply as $X$.
\end{definition}

\begin{remark}
\label{rmk_biquandlepresentationmatrix}
    If $X$ is a finite biquandle, we can represent all of the information about it in two operation tables.
    Fix some ordering on the elements of $X$ and label them with the integers $1$ through $n$ (where $n$ is the size of $X$).
    Then the operation table for $\untri$ is an $n \times n$ matrix of integers in $\{ 1, \dotsc, n \}$, and the $(i,j)$ entry of this matrix is $i \untri j$.
    The operation table for $\ovtri$ is defined similarly.
    For example, the following operation tables represent a biquandle on three elements.
    \begin{align*}
        \untri: \quad
        \begin{bmatrix}
            2 & 1 & 2 \\
            1 & 3 & 3 \\
            3 & 2 & 1
        \end{bmatrix}
        \qquad \qquad
        \ovtri: \quad
        \begin{bmatrix}
            3 & 3 & 3 \\
            2 & 2 & 2 \\
            1 & 1 & 1
        \end{bmatrix}
    \end{align*}
\end{remark}

    The conditions in the biquandle definition are analogous to the Reidemeister moves in knot theory when we interpret $x \untri y$ as ``$x$ passing under $y$'' and $x \ovtri y$ as ``$x$ passing over $y$'' in the following way:
    
    \begin{center}
        \includegraphics[scale=0.15]{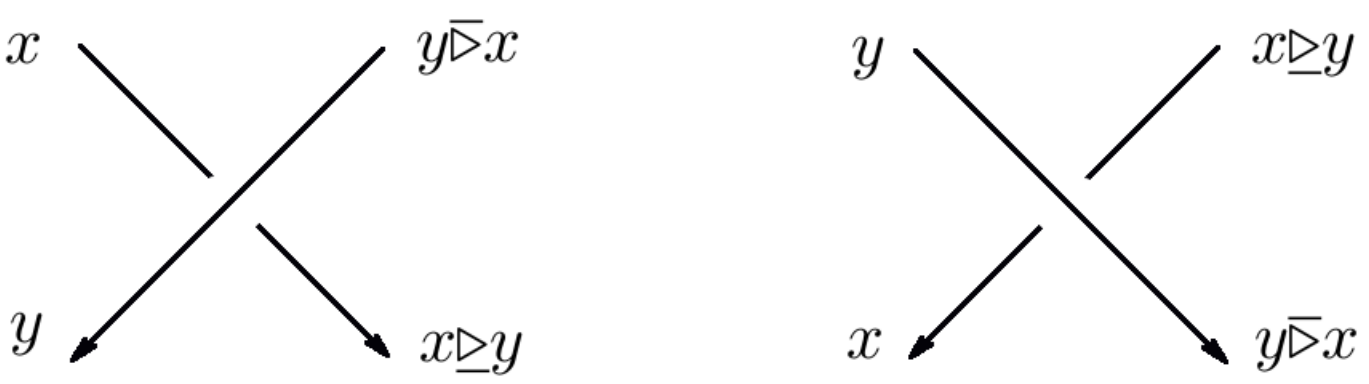}
    \end{center}
    
    Fix a biquandle $X$.
    An $X$-coloring of an oriented knot (or link) diagram $L$ is an assignment of an element of $X$ to each arc in the diagram such that the above relationships hold at each crossing.
    Then the biquandle axioms are precisely what is required for the $X$-coloring to be preserved as Reidemeister moves are performed on the diagram.
    For this reason, the number of $X$-colorings of a diagram is a link invariant, called the \emph{biquandle counting invariant}, and denoted $\Phi_X^\bbZ(L)$.
    
\subsection{Biquandle Brackets}
    
    In \cite{nelson2017quantum}, an enhancement of the biquandle counting invariant is introduced.
    For each $X$-coloring of $D$, one can perform a smoothing operation similar to the construction in the Kauffman bracket, but this time keeping track of the colorings at each crossing as follows:
    
    \begin{center}
        \includegraphics[scale=0.22]{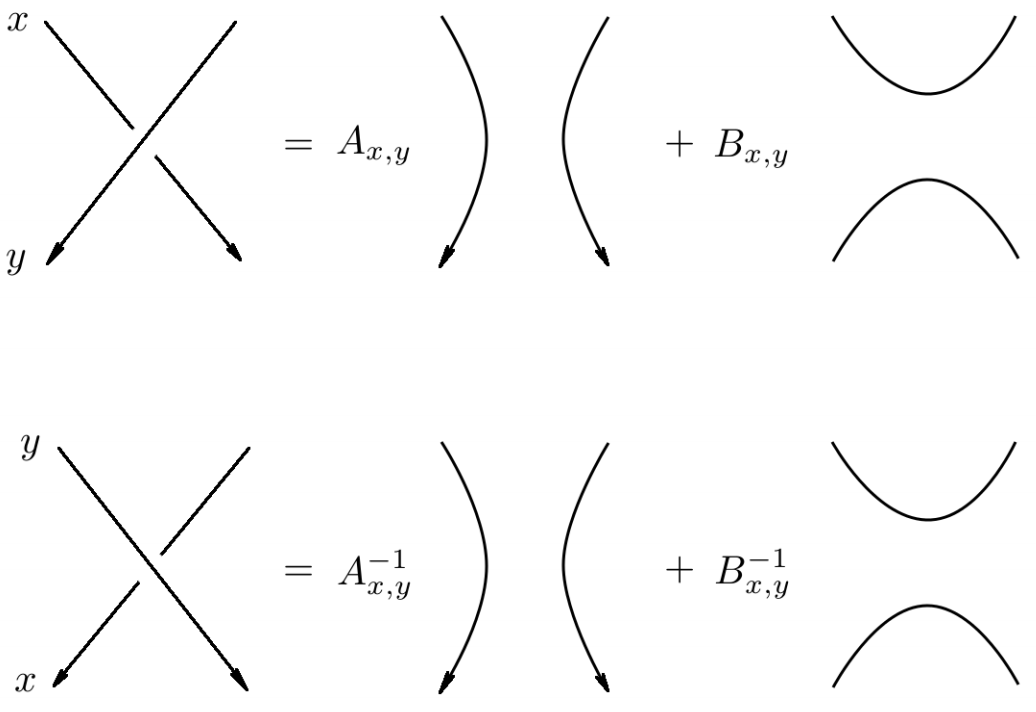}
    \end{center}
    
    Where for each $x,y \in X$, $A_{x,y}$ and $B_{x,y}$ are invertible elements of some commutative ring with unity $R$.
    Additionally, the removal of a circle with no crossings should correspond to multiplication by some element $\delta \in R$, and to correct for the additional states generated by kinks (from the first Reidemeister move), a writhe factor should be included, which can simply be an appropriate power of some element $w \in R^\times$.
    For the bracket to be an invariant of an $X$-colored link, it should not change when Reidemeister moves are applied and the $X$-coloring is updated correspondingly.
    Below are the conditions that must be satisfied by $A,B,\delta$, and $w$ for this to be true.
    For more details, see \cite{nelson2017quantum}.
 
\begin{definition}
\label{def_biquandlebracket}
    A \emph{biquandle bracket} on a biquandle $X$ with values in commutative ring (with unity) $R$ is a pair of maps $A,B:X\times X \rightarrow R^\times$ and two distinguished elements $\delta\in R, w\in R^\times$ which satisfy the following conditions.
    
    \begin{enumerate}
        \item[(i)]
            For all $x \in X$,
            $w=-A_{x,x}^2B_{x,x}^{-1}$.
            
        \item[(ii)]
            For all $x, y \in X$,
            $\delta = -A_{x,y}B_{x,y}^{-1}-A_{x,y}^{-1}B_{x,y}$.
            
        \item[(iii)]
            For all $x, y, z \in X$, all of the following equations hold.
            \begin{align*}
                A_{x,y} A_{y,z} A_{x \untri y, z \ovtri y} &= A_{x,z} A_{y \ovtri x, z \ovtri x} A_{x \untri z, y \untri z}, \\
                A_{x,y} B_{y,z} B_{x \untri y, z \ovtri y} &= B_{x,z} B_{y \ovtri x, z \ovtri x} A_{x \untri z, y \untri z}, \\
                B_{x,y} A_{y,z} B_{x \untri y, z \ovtri y} &= B_{x,z} A_{y \ovtri x, z \ovtri x} B_{x \untri z, y \untri z}, \\
                A_{x,y} A_{y,z} B_{x \untri x, z \ovtri y} &= A_{x,z} B_{y \ovtri x, z \ovtri z} A_{x \untri z, y \untri z} + A_{x, z} A_{y \ovtri x, z \ovtri x} B_{x \untri z, y \untri z} \\
                &\qquad + \delta A_{x, z} B_{y \ovtri x, z \ovtri x} B_{x \untri z, y \untri z} + B_{x, z} B_{y \ovtri x, z \ovtri x} B_{x \untri z, y \untri z}, \\
                B_{x, z} A_{y \ovtri x, z \ovtri x} A_{x \untri z, y \untri z} &= B_{x, y} A_{y, z} A_{x \untri y, z \ovtri y} + A_{x, y} B_{y, z} A_{x \untri y, z \ovtri y} \\
                &\qquad + \delta B_{x, y} B_{y, z} A_{x \untri y, z \ovtri y} + B_{x, y} B_{y, z} B_{x \untri y, z \ovtri y}.
            \end{align*}
    \end{enumerate}
    
    Note that we denote $A(x,y)$ and $B(x,y)$ by $A_{x,y}$ and $B_{x,y}$.
    Additionally, since $\delta$ and $w$ are determined by the maps $A$ and $B$, we will generally denote a biquandle bracket simply by the pair $\beta = (A,B)$.
    Finally, if $\beta$ is a biquandle bracket on a biquandle $X$ taking values in $R$, then we say $\beta$ is an \emph{$X$-bracket}.
\end{definition}

If $f$ is a coloring of an oriented link, the value of the biquandle bracket $\beta$ on $f$ is denoted $\beta(f)$.

\begin{example}
\label{ex_computation}
    Here is a computation of the value of $\beta(f)$ for a coloring $f$ (shown at the top-left corner) of the trefoil knot:
    \begin{center}
        \includegraphics[scale=0.16]{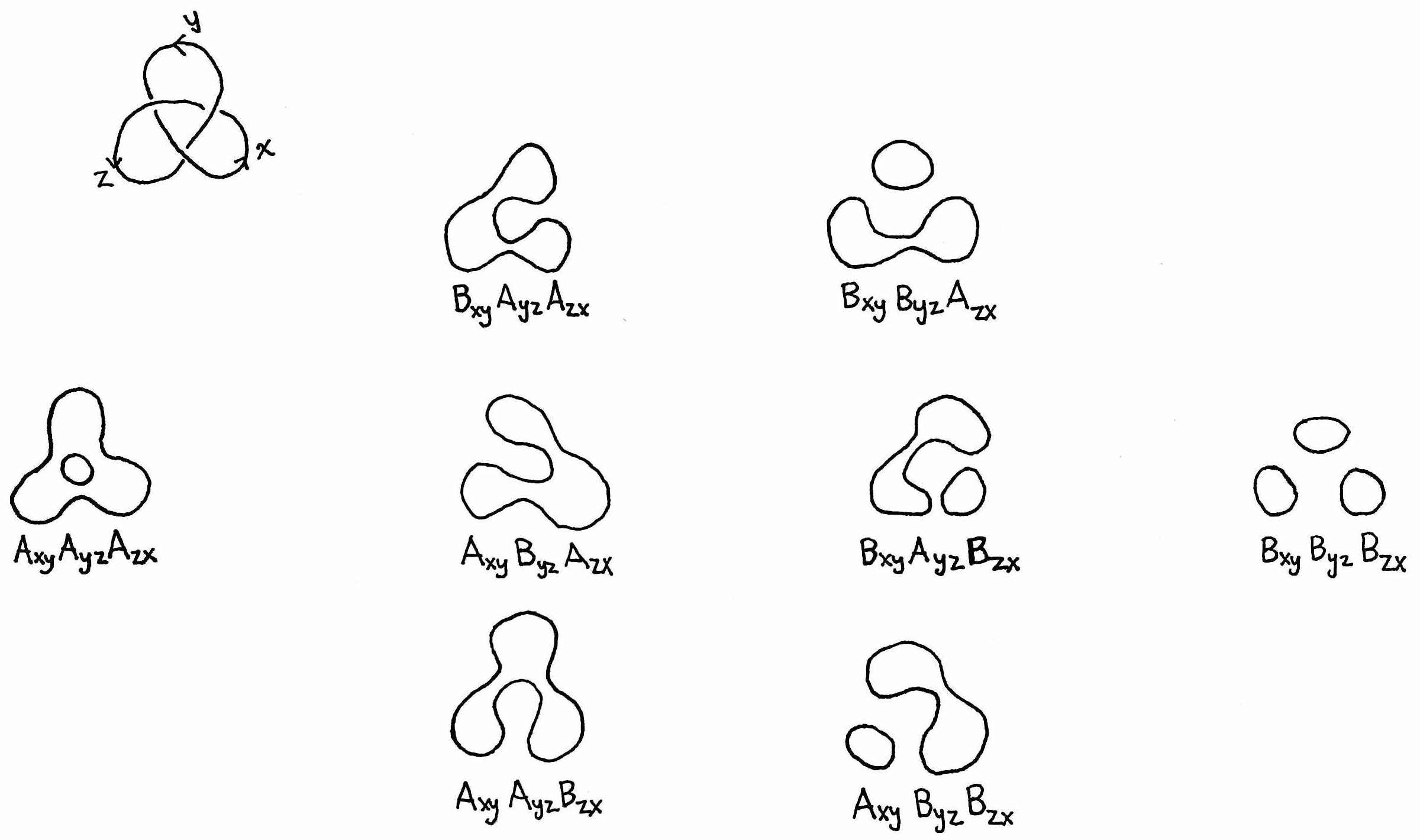}
        {\small
        \[
            \beta(f) = w^{-3} \left( \begin{array}{cccc}
            & + \delta B_{x,y} A_{y,z} A_{z,x} & + \delta^2 B_{x,y} B_{y,z} A_{z,x} & \\
            \delta^2 A_{x,y} A_{y,z} A_{z,x} & + \delta A_{x,y} B_{y,z} A_{z,x} & + \delta^2 B_{x,y} A_{y,z} B_{z,x} & + \delta^3 B_{x,y} B_{y,z} B_{z,x} \\
            & + \delta A_{x,y} A_{y,z} B_{z,x} & + \delta^2 A_{x,y} B_{y,z} B_{z,x} &
            \end{array} \right).
        \]}
    \end{center}
\end{example}

The oriented link invariant corresponding to $\beta$, denoted $\Phi_X^\beta(L)$ simply the multiset of all biquandle bracket values, one for each $X$-coloring of the diagram:
\[
    \Phi_X^\beta(L) = \{ \beta(f) : f \text{ is an } X\text{-coloring of } L \}.
\]
Note that $\Phi_X^\beta(L)$ is an enhancement of the biquandle counting invariant $\Phi_X^\bbZ(L)$ because the counting invariant is simply the cardinality of this multiset.

\begin{remark}
\label{rmk_biquandlebracketpresentationmatrix}
If $X$ is finite and we fix an ordering $X = \left\{ x_1, \dotsc, x_n \right\}$, we can encapsulate all of the information about a biquandle bracket in a presentation matrix.
This is an $n$ by $2n$ matrix $M$ over $R$ with entries $M_{i,j}=A_{x_i,x_j}$ and $M_{i,n+j}=B_{x_i,x_j}$ for $i,j\in\{1,2,...,n\}$.
\end{remark}

\begin{example}
\label{ex_samplebracket}
    Let $X$ be the biquandle given by the following operation table.
    \begin{align*}
        \untri: \quad
        \begin{bmatrix}
            2 & 2 \\
            1 & 1 \\
        \end{bmatrix}
        \qquad \qquad
        \ovtri: \quad
        \begin{bmatrix}
            2 & 2 \\
            1 & 1 \\
        \end{bmatrix}
    \end{align*}
    This biquandle's operations simply flip the left operand, regardless of the right operand.
    Thus, in any $X$-colored link diagram, if one follows a particular strand, the color will alternate at every crossing.
    Let $R = (\mathbb{Z}/2\mathbb{Z})[t] / \left( 1 + t + t^3 \right)$.
    Then the following presentation matrix defines an $X$-bracket.
    \begin{align*}
        \left[
            \begin{array}{cc|cc}
                1 & 1+t & t & t+t^2 \\
                1+t^2 & 1 & 1 & t \\
            \end{array}
        \right]
    \end{align*}
    This biquandle bracket was found in \cite{nelson2017quantum}.
\end{example}

\begin{example}
\label{ex_jones}
    Let $X$ be any biquandle, and let $R$ be any commutative ring.
    If $A_{x,y} = a$ and $B_{x,y} = b$ for all $x,y \in X$ and some $a,b \in R^\times$, then the $X$-bracket $(A,B)$ is called a ``constant'' biquandle bracket (the reader should verify that this does indeed define an $X$-bracket).
    In general, the value of a biquandle bracket on links is unchanged when all values of $A_{x,y}$ and $B_{x,y}$ are scaled by a common factor of $R^\times$ (see \cite{nelson2017quantum}).
    So, dividing through by $b$, the above bracket gives the same invariant as the bracket $A_{x,y} = \frac{a}{b}$, $B_{x,y} = 1$ for all $x,y \in X$. By considering the maps $A, B$ as instead taking values in $R\left[\left(\frac{a}{b}\right)^{\pm 1/2}\right]$, we can make the substitution $q^2 = \frac{a}{b}$ and divide everything through by $q$ to yield the equivalent bracket (when treated over $R\left[\left(\frac{a}{b}\right)^{\pm 1/2}\right]$), $A_{x,y} = q$, $B_{x,y} = q^{-1}$ for all $x, y \in X$. 
    Now, for any particular $X$-coloring of a link, the value of this bracket is evidently the Jones polynomial of the link, which is a Laurent polynomial in the variable $q^2$. Hence the invariant itself still takes values in $R$ rather than $R\left[\left(\frac{a}{b}\right)^{\pm 1/2}\right]$.
    Therefore, the value of a constant biquandle bracket (with $A_{x,y} = a$ and $B_{x,y} = b$) is a multiset containing the Jones polynomial evaluated at $\frac{a}{b}$, and it contains this value with multiplicity equal to the number of $X$-colorings of the link.
    
    Note in the above construction that we only obtain the value of the Jones polynomial \emph{evaluated} at $q \in R$.
    If we want to retain the full power of the Jones polynomial, we can take $R = \bbZ[t,t^{-1}]$ and take $q = t$.
\end{example}

\subsection{Biquandle 2-Cocycles}

Next, we define a biquandle 2-cocycle following the notation of \cite{nelson2017quantum}.
 
\begin{definition}
\label{def_biquandle2cocycle}
    Let $X$ be a biquandle, and let $G$ be an abelian group (written multiplicatively here).
    A function $\phi:X\times X\rightarrow G$ is a \emph{biquandle 2-cocycle} if, for all $x, y, z \in X$, we have
    \begin{enumerate}
        \item[(i)] $\phi(x,x) = 1$,
        \item[(ii)] $\phi(x,y) \cdot \phi(y,z) \cdot \phi\left(x\untri y, z\ovtri y\right) = \phi(x,z) \cdot \phi\left(y\ovtri x,z\ovtri x\right) \cdot \phi\left(x\untri z, y\untri z\right)$.
    \end{enumerate}
\end{definition}

A $2$-cocycle $\phi$ can be used to define the \emph{biquandle 2-cocycle invariant}, as seen in \cite{ceniceros2014augmented}.
Namely, for each $X$-coloring of a link, compute the value
\[
    \prod_{\tau} \phi\left(x_\tau,y_\tau\right)^{\epsilon(\tau)},
\]
where $\tau$ ranges across all crossings in the colored link, $\epsilon(\tau)$ is the sign (either $+1$ or $-1$) of $\tau$, and $x_\tau,y_\tau$ are the biquandle colors of the arcs on the left side of the crossing when it is oriented so that strands point downwards, following a similar convention to the biquandle bracket above.
The value of the biquandle $2$-cocycle invariant associated to $\phi$ is then the multiset of all such values, one for each $X$-coloring of the link.

\begin{remark}
\label{rmk_2cocyclepresentationmatrix}
    Again if $X$ is finite, we construct a presentation matrix for a cocycle in the same fashion as with the biquandle brackets; fixing the ordering $X = \left\{ x_1, \dotsc, x_n \right\}$, the presentation matrix $P$ for a cocycle is an $n \times n$ matrix over $A$ with entries $P_{i,j} = \phi \left( x_i, x_j \right)$.
\end{remark}

\begin{example}
\label{ex_linking}
    Let $X$ be the biquandle described in Example \ref{ex_samplebracket} above.
    Let $A$ be the free abelian group on two symbols, $a$ and $b$.
    Then the following presentation matrix defines a biquandle 2-cocycle $\phi : X \times X \to A$.
    \begin{align*}
        \begin{bmatrix}
            1 & a \\
            b & 1
        \end{bmatrix}
    \end{align*}
    The invariant corresponding to $\phi$ is trivial on all knots (single-component links).
    In fact, more is true: for any $X$-colored knot diagram, at any crossing $\tau$, we have $x_\tau = y_\tau$ (so that $\phi\left(x_\tau, y_\tau\right) = 1$).
    Additionally, for a two-component link, the invariant corresponding to $\phi$ is the multiset $\left\{ 1, 1, \left( a b \right)^\ell, \left( a b \right)^\ell \right\}$, where $\ell$ is the linking number of the two components of the link.
    For a proof of these facts, see example 3 in \cite{hoffer2019structure}.
\end{example}

\section{Biquandle Bracket Homology}
\label{chap_construction}

We now present the construction of a Khovanov homology-style invariant of links based on Khovanov's original construction in \cite{khovanov2000categorification}.

\subsection{Group-Graded Modules and Complexes}

Khovanov's original construction involves taking the homology of chain complexes of $\bbZ$-graded modules.
Calculating the graded Euler characteristic of these homologies yields the Jones polynomial in a variable $q$, where $q$ is the generator of $\bbZ$, and the additive structure of $\bbZ$ is written multiplicatively (i.e. $\bbZ = \{ \dotsc, q^{-2}, q^{-1}, 1, q, q^2, \dotsc \}$).
With the aim of obtaining the biquandle bracket as the graded Euler characteristic of a homology invariant, we need to extend our grading from $\bbZ$ to an arbitrary abelian group.
\begin{definition}
\label{def_Hgradedring}
If $H$ is an abelian group, an \emph{$H$-graded ring} is a ring $S$ which can be decomposed as a direct sum \[ S = \bigoplus_{h \in H} S_h\] of additive groups, such that $S_gS_h\subset S_{gh}$ for any $g,h\in H$.

\end{definition}
\begin{definition}
\label{def_HgradedSmodule}
    If $H$ is a group and $S$ is an $H$-graded commutative ring, an \emph{$H$-graded $S$-module} is an $S$-module $M$ which can be decomposed as a direct sum
    \[
        M = \bigoplus_{h \in H} M_h
    \]
    of additive groups (not necessarily $S$-modules), such that $S_g M_h\subset M_{gh}$ for any $g,h\in H$.
    If $a\in M\setminus\{0\}$, we say $a$ has \emph{well defined degree} if $a\in M_h$ for some $h$. In this case we say $a$ has \emph{degree} $h$, and write $\deg(a)=h$.
\end{definition}

Note that if $S$ is an $H$-graded commutative ring, then $S$ is also an $H$-graded $S$-module, with the $H$-grading given by $S = \bigoplus S_h$.
Also note that direct sums and tensor products of $H$-graded $S$-modules are still $H$-graded $S$-modules.
Indeed, if we have
\[
    M = \bigoplus_{h \in H} M_h \qquad \text{and} \qquad N = \bigoplus_{h \in H} N_h,
\]
then the direct sum can be written as
\[
    M \oplus N = \bigoplus_{h \in H} M_h \oplus N_h,
\]
so an $H$-grading can be given by $(M \oplus N)_h = M_h \oplus N_h$.
Also, the tensor product can be written as
\[
    M \otimes N = \bigoplus_{h \in H} \left( \bigoplus_{g f = h} (M_g \otimes_\bbZ N_f)/\langle sm\otimes n - m\otimes sn: s\in S, m\in M_g, n\in N_f\rangle \right),
\]
and an $H$-grading can be given by
\[
    (M \otimes N)_h = \bigoplus_{g f = h} (M_g \otimes_\bbZ N_f)/\langle sm\otimes n - m\otimes sn: s\in S, m\in M_g, n\in N_f\rangle.
\]
To see why the tensor product takes precisely this form, see \cite{hazrat2016graded}.
\begin{remark}
\label{rmk_quotientgrading}
If $S$ is an $H$-graded ring, and $\varphi:H\to G$ is a group homomorphism, then $S$ is a $G$-graded ring with 
\[
S= \bigoplus_{g\in G} \left(\bigoplus_{h\in\varphi^{-1}(\{g\})} S_h\right)
\]
Similarly if $M$ is an $H$-graded $S$-module, then $M$ is a $G$-graded $S$-module with \[
M= \bigoplus_{g\in G} \left(\bigoplus_{h\in\varphi^{-1}(\{g\})} M_h\right)
\]
Thus if $L\unlhd H$, $M$ is naturally an $H/L$-graded $S$-module via the projection map.
\end{remark}
\begin{definition}
\label{def_gdim}
    If $M = \bigoplus_{h \in H} M_h$ is an $H$-graded $S$-module, the \emph{graded dimension} of $M$ is
    \[
        \gdim(M) = \sum_{h \in H} \rank_\bbZ(M_h) \cdot h.
    \]
    This is a (possibly infinite) formal sum of group elements with coefficients in $\bbZ_{\geq0}\cup\{\infty\}$.
\end{definition}

Note that direct sums of $H$-graded $S$-modules satisfy the following graded dimension equation:
\begin{align*}
    \gdim(M \oplus N) &= \gdim(M) + \gdim(N)
\end{align*}
Also, if $M$ and $N$ are free $S$-modules with finite $S$-rank, having $S$-bases $(m_i)_{i=1}^k$ and $(n_j)_{j=1}^\ell$ respectively, and if $\deg(m_i)=h_i$ and $\deg(n_j)=g_j$ for all $i$ and $j$, then \[\gdim(M\otimes N) = \gdim(S)\sum_{i=1}^k\sum_{j=1}^\ell h_ig_j.\]
\begin{definition}
\label{def_Hgradingshift}
    If $M$ is an $H$-graded $S$-module, we can \emph{shift the grading} of $M$ by an element $h \in H$ and obtain a new $H$-graded $S$-module $M\{h\}$.
    The underlying module structure is the same, but if $a \in M$ has degree $g$, then the same element $a \in M\{h\}$ has degree $hg$. If $\sum_{h\in I} h$ is a formal sum of group elements, for some subset $I\subset H$, we define $M\{\sum_{h\in I} h\} = \bigoplus_{h\in I} M\{h\}$.
\end{definition}

\begin{definition}
\label{def_complex}
    A \emph{cochain complex} of $H$-graded $S$-modules is a sequence $C = \left( C^i \right)_{i \in \bbZ}$ of $H$-graded $S$-modules along with differentials $d^i : C^i \to C^{i+1}$ such that $d^{i+1} \circ d^i = 0$ for all $i \in \bbZ$.
    We say that the differentials are \emph{degree-preserving} if $\deg(d^i(a)) = h$ whenever $\deg(a) = h$.
\end{definition}

\begin{definition}
\label{def_cohomology}
    Suppose $(C,d)$ is a cochain complex of $H$-graded $S$-modules with degree-preserving differentials.
    Then the \emph{cohomology sequence} of $(C,d)$ is the sequence $\cH(C) = (\cH^i)_{i \in \bbZ}$, where
    \[
        \cH^i = \ker(d^{i}) / \im(d^{i-1}).
    \]
    Note that each $\cH^i$ is again an $H$-graded $S$-module, with grading given by
    \[
        (\cH^i)_h = (\ker(d^{i}))_h/(\im(d^{i-1}))_h.
    \]
    This makes sense when $d$ is degree-preserving since $(\ker(d^i))_h \supset (\im(d^{i-1}))_h$, so we have the following decomposition:
    \[
        \cH^i = \ker(d^{i}) / \im(d^{i-1}) = \bigoplus_{h \in H} (\ker(d^{i}))_h/(\im(d^{i-1}))_h.
    \]
\end{definition}

\begin{definition}
\label{def_eulerchar}
    If $C = (C^i)_{i \in \bbZ}$ is a sequence of $H$-graded $S$-modules, the \emph{graded Euler characteristic} of $C$ is
    \[
        \chi(C) = \sum_{i \in \bbZ} (-1)^i \gdim(C^i).
    \]
\end{definition}

Note that if the differentials of a cochain complex $C$ are degree-preserving, then the Euler characteristic of the cohomology sequence is the same as the Euler characteristic of the original complex: $\chi(C) = \chi(\cH(C))$.
This is shown in \cite{hatcher2001algebraic} for ungraded complexes, and the proof can be extended to this setting since, when the differentials are degree preserving, the cochain complex and the cohomology sequence both decompose as graded direct sums of ungraded sequences of modules.

\begin{definition}
\label{def_complexshift}
    If $C = (C^i)_{i \in \bbZ}$ is a cochain complex of $H$-graded $S$-modules, we can \emph{shift the index} of $C$ by an integer $j$ and obtain a new cochain complex $C[j]$.
    Again, the underlying complex structure is the same aside from this shift, we simply have $C[j]^i = C^{i-j}$, and the differential maps are shifted accordingly.
    
    We will also use the notation $C\{h\}$ to denote a cochain complex derived from $C$ by shifting the grading of each $H$-graded $S$-module in $C$ by the element $h \in H$.
    That is, if $C = (C^i)_{i \in \bbZ}$, then $C\{h\} = (C^i\{h\})_{i \in \bbZ}$.
    So $\{ h \}$ always corresponds to a shift of the $H$-grading by the group element $h$, and $[j]$ always corresponds to a shift of the index of the complex by the integer $j$.
\end{definition}

Note that for any complex $C$ and any $h \in H$ and $j \in \bbZ$, we have
\[
    \chi(C[j]\{h\}) = (-1)^j h \chi(C).
\]

\begin{remark}
\label{rmk_evaluation}
Suppose $H$ is a subset of $R^\times$, the group of units in some commutative unital ring $R$, and suppose we have a \emph{finite} formal sum of elements of $H$, i.e. a sum of the form
\[
    \sum_{h \in H} n_h \cdot h,
\]
where each $n_h$ is an integer and only finitely many of them are nonzero.
Then we can \emph{evaluate} this formal sum to obtain the following element of $R$:
\[
    \sum_{h \in H} \iota(n_h) h,
\]
where $\iota : \bbZ \to R$ is the ring homomorphism which sends $1 \in \bbZ$ to $1 \in R$.
\end{remark}

\subsection{The Ring \texorpdfstring{$S$}{S} and the Algebra \texorpdfstring{$M$}{M}}
\label{sec_rsam}

Throughout the rest of the paper, let $X$ be a fixed biquandle with a distinguished element $x_0$, and let $R$ be a fixed commutative ring.
Also let $\beta = (A,B)$ be a fixed $X$-bracket taking values in $R$, which will be the basis for our construction.

For each $x,y \in X$, let
\[
    q_{x,y} = - A_{x,y}^{-1}B_{x,y},
\]
and let $q = q_{x_0,x_0}$.
Let $G$ be the group generated by the elements $q_{x,y}^{-1} q$ of $R^\times$:
\[
    G = \left\langle q_{x,y}^{-1} q : x, y \in X \right\rangle \leq R^\times.
\]
Finally, let $S$ be the $R^\times$ graded group algebra $\bbZ[G]$, with the $R^\times$-grading given by $\deg(g) = g$ for all $g \in G$.

Now let $M$ be the $R^\times$-graded $S$-module $S[t]/(t^2)$ with the additional grading given by $\deg(1) = q$ and $\deg(t) = q^{-1}$.
This means that, for example, the element $g t \in M$ has degree $gq^{-1}$, while the element $g \in M$ has degree $g q$.
$M$ is a Frobenius algebra with the following multiplication and comultiplication operations:
\begin{align*}
    m &: M \otimes M \to M \\
    m &: 1 \otimes 1 \mapsto 1, \qquad 1 \otimes t \mapsto t, \\
    &\quad t \otimes 1 \mapsto t, \qquad t \otimes t \mapsto 0. \\
    \\
    \Delta &: M \to M \otimes M \\
    \Delta &: 1 \mapsto 1 \otimes t + t \otimes 1, \qquad t \mapsto t \otimes t.
\end{align*}
Both of these operations are ``degree-lowering,'' in the sense that the degree of the image of an element is $q^{-1}$ times the degree of the element in the domain.

The ring $S$ and the algebra $M$ are analogous (and will play the same roles) as the $\bbZ$-graded ring $R = \bbZ[c]$ and the $\bbZ$-graded $R$-algebra $A = R[X]/(X^2)$ of \cite{khovanov2000categorification}.
As we will see, however, in the case of a general biquandle, the ring $S$ is more complicated than the ring $R$ of \cite{khovanov2000categorification}.
This complication is what leads to the failure of the cohomology invariant presented here to truly categorify the biquandle bracket.

\subsection{Construction of the Cohomology Invariant}

We are now ready to present the construction of the link invariant.
Suppose $f$ is an $X$-coloring of an oriented link $L$.
Perform smoothings as in the construction of the biquandle bracket link invariant (recall the image from example \ref{ex_computation}, which has been replicated below):

\begin{center}
    \includegraphics[scale=0.16]{SmoothingsPen.jpg}
\end{center}

There will be $2^n$ smoothings in total, where $n$ is the number of crossings in the link diagram.
We arrange these smoothings as the vertices of an $n$-dimensional cube, ordered from left to right by the number of horizontal splittings on positive crossings (those contributing a $B_{x,y}$ factor) and vertical splittings on negative crossings (those contributing an $A_{x,y}^{-1}$ factor). Note that we're still using the convention of orienting crossings so the strands point downwards when determining vertical and horizontal.

Now replace each circle in each smoothing with a copy of $M$ and tensor adjacent copies together.
Shift the $R^\times$-grading of each resultant $S$-module by the coefficient extracted by the smoothing, as follows:

{\small
\begin{center}
    \begin{tikzcd}
                                                                          &  & {\substack{M \\ \left\{-B_{x,y} A_{y,z} A_{z,x} \right\}}}  &  & {\substack{M \otimes M \\ \left\{ B_{x,y} B_{y,z} A_{z,x} \right\}}} &  &                                                                        \\
                                                                          &  & \phantom{\oplus}                                                           &  &                                                                        &  &                                                                        \\
    {\substack{M \otimes M \\ \left\{A_{x,y} A_{y,z} A_{z,x} \right\}}} &  & {\substack{M \\ \left\{ -A_{x,y} B_{y,z} A_{z,x} \right\}}} &  & {\substack{M \otimes M \\ \left\{ B_{x,y} A_{y,z} B_{z,x} \right\}}} &  & {\substack{M \otimes M \otimes M \\ \left\{ -B_{x,y} B_{y,z} B_{z,x} \right\}}} \\
                                                                          &  &                                                            &  & \phantom{\oplus}                                                                       &  &                                                                        \\
                                                                          &  & {\substack{M \\ \left\{ -A_{x,y} A_{y,z} B_{z,x} \right\}}} &  & {\substack{M \otimes M \\ \left\{ A_{x,y} B_{y,z} B_{z,x} \right\}}} &  & \\
                                                                                                                                &  &                                                                                                                 &  &                                                                                                              &  &                                                                                             
    \end{tikzcd}
\end{center}}

Next, we add maps in between the modules to form a cube.
The maps are derived from the Frobenius algebra structure on $M$, which mimics the data of a topological quantum field theory.
Thus, if we'd like to form a map $M \otimes M \to M$, we'll use the multiplication map $m$, and if we'd like to form a map $M \to M \otimes M$, we'll use the comultiplication map $\Delta$.

If two smoothings differ by how a single crossing was split, then the multiplication and comultiplication maps should be applied to the tensor factors of $M$ involved in a cobordism between the two corresponding smoothings.
For example, the upper rightmost map in the following diagram is applied to the tensor factors corresponding with the cobordism taking the circle at the bottom of the top-right smoothing to the pair of circles at the bottom of the right-most smoothing of the link diagram.

Each map must be augmented by multiplication by a particular element of $G$ so that the end result will be a cochain complex with degree-preserving differentials.
The element of $G$ to be multiplied is simply $q$ multiplied by the quotient of the shift applied to the domain over the shift applied to the codomain. Thus if a map corresponds to switching how a crossing colored by $x$ and $y$ was split, then the map must be multiplied by $qq_{x,y}^{-1}$.

Due to the commutation relations involving $m$ and $\Delta$ (see \cite{khovanov2000categorification}), and the fact that these are $S$-module homomorphisms, this yields a cube with commutative faces.
But we will be summing the modules along the columns, and we'd like a cochain complex, so we need the faces of the cube to be \emph{anti}-commutative.
This is why some of the maps in the following image have minus signs.
For a detailed explanation of the position of the minus signs, see Section 3.2 of \cite{bar2002khovanov}.

{\small
\begin{center}
        \begin{tikzcd}
                                                                                                                                                                                  &  & {\substack{M \\ \left\{-B_{x,y} A_{y,z} A_{z,x} \right\}}} \arrow[rr, "{-qq_{y,z}^{-1}\Delta}"'] \arrow[rr] \arrow[rrdd, near start, "{-qq_{z,x}^{-1}\Delta}"']                             &  & {\substack{M \otimes M \\ \left\{ B_{x,y} B_{y,z} A_{z,x} \right\}}} \arrow[rrdd, "{qq_{z,x}^{-1}\Delta}"'] &  &                                                                        \\
                                                                                                                                                                                  &  &  \phantom{\oplus}                                                                                                                                                                      &  & \;                                                                                                           &  &                                                                        \\
        {\substack{M \otimes M \\ \left\{A_{x,y} A_{y,z} A_{z,x} \right\}}} \arrow[rruu, "{qq_{x,y}^{-1}m}"'] \arrow[rrdd, "{qq_{z,x}^{-1}m}"'] \arrow[rr, "{qq_{y,z}^{-1}m}"'] &  & {\substack{M \\ \left\{ -A_{x,y} B_{y,z} A_{z,x} \right\}}} \arrow[rruu, crossing over, very near end, "{qq_{x,y}^{-1}\Delta}"'] &  & {\substack{M \otimes M \\ \left\{ B_{x,y} A_{y,z} B_{z,x} \right\}}} \arrow[rr, "{-qq_{y,z}^{-1}\Delta}"']  &  & {\substack{M \otimes M \otimes M \\ \left\{ -B_{x,y} B_{y,z} B_{z,x} \right\}}} \\
                                                                                                                                                                                  &  &  \phantom{\oplus}                                                                                                                                                                           &  &  \;                                                                                                            &  &                                                                        \\
                                                                                                                                                                                  &  & {\substack{M \\ \left\{ -A_{x,y} A_{y,z} B_{z,x} \right\}}} \arrow[rr, "{qq_{y,z}^{-1}\Delta}"'] \arrow[rruu, very near end, "{qq_{x,y}^{-1}\Delta}"']                                         &  & {\substack{M \otimes M \\ \left\{ A_{x,y} B_{y,z} B_{z,x} \right\}}} \arrow[rruu, "{qq_{x,y}^{-1}\Delta}"'] \arrow[from=lluu, crossing over, near start, "{-qq_{z,x}^{-1}\Delta}"'] &  &   \\
                                                                                                                                &  &                                                                                                                 &  &                                                                                                              &  &                                                                                             
        \end{tikzcd}
\end{center}}

Now take the direct sum of the modules and maps along the columns to obtain a sequence $\tilde{C}_\beta(f)$ of $R^\times$-graded $S$-modules with maps between them.
This in fact constitutes a cochain complex because the faces of the cube are anti-commutative.
Each element in each module will have two images in each target space under the double-differential map, corresponding to the two different ways to get around the square between the domain and target.
And these images will cancel out, leading to $d^{i+1} \circ d^i = 0$.

{\small
\begin{center}
        \begin{tikzcd}
                                                                                                                                        &  & {\substack{M \\ \left\{-B_{x,y} A_{y,z} A_{z,x} \right\}}} \arrow[rr] \arrow[rrdd]                               &  & {\substack{M \otimes M \\ \left\{ B_{x,y} B_{y,z} A_{z,x} \right\}}} \arrow[rrdd]                          &  &                                                                                             \\
                                                                                                                                        &  & \oplus \arrow[d, no head, dotted]                                                                               &  & \oplus \arrow[d, no head, dotted]                                                                            &  &                                                                                             \\
        {\substack{M \otimes M \\ \left\{A_{x,y} A_{y,z} A_{z,x} \right\}}} \arrow[rruu] \arrow[rrdd] \arrow[rr] \arrow[dddd, dotted] &  & {\substack{M \\ \left\{ -A_{x,y} B_{y,z} A_{z,x} \right\}}} \arrow[rruu, crossing over]  \arrow[d, no head, dotted] &  & {\substack{M \otimes M \\ \left\{ B_{x,y} A_{y,z} B_{z,x} \right\}}} \arrow[rr] \arrow[d, no head, dotted] &  & {\substack{M \otimes M \otimes M \\ \left\{ -B_{x,y} B_{y,z} B_{z,x} \right\}}} \arrow[dddd, dotted] \\
                                                                                                                                        &  & \oplus \arrow[d, no head, dotted]                                                                               &  & \oplus \arrow[d, no head, dotted]                                                                            &  &                                                                                             \\
                                                                                                                                        &  & {\substack{M \\ \left\{ -A_{x,y} A_{y,z} B_{z,x} \right\}}} \arrow[rr] \arrow[rruu] \arrow[dd, dotted]           &  & {\substack{M \otimes M \\ \left\{ A_{x,y} B_{y,z} B_{z,x} \right\}}} \arrow[rruu] \arrow[dd, dotted] \arrow[from=lluu, crossing over]      &  &                                                                                             \\
                                                                                                                                        &  &                                                                                                                 &  &                                                                                                              &  &                                                                                             \\
        \tilde{C}^0_\beta(f) \arrow[rr, "d^0"]                                                                                                   &  & \tilde{C}^1_\beta(f) \arrow[rr, "d^1"]                                                                                   &  & \tilde{C}^2_\beta(f) \arrow[rr, "d^2"]                                                                                &  & \tilde{C}^3_\beta(f)                                                                                
        \end{tikzcd}
\end{center}}

Now we must apply a shift to account for the writhe of the original link.
Let $n_+$ be the number of positive crossings in $L$ and let $n_-$ be the number of negative crossings.
Recall that $w = -A_{x_0,x_0}^2B_{x_0,x_0}^{-1} \in R^\times$.
So define $C_\beta(f)$ to be the following shifted cochain complex:
\[
    C_\beta(f) = \tilde{C}_\beta(f)[-n_-] \{ (-1)^{n_-} w^{n_--n_+} \}.
\]
Finally, take the cohomology of $C_\beta(f)$ to obtain a sequence $\cH_\beta(f) = \cH(C_\beta(f))$ of $R^\times$-graded $S$-modules.
This is an invariant of $X$-colored oriented links $f$.
If we want to start with an uncolored oriented link we must consider all $X$-colorings simultaneously as follows:
\[
    \Bh_\beta(L) = \{ \cH_\beta(f) : f \text{ is an } X \text{-coloring of } L \}.
\]
The quantity $\Bh_\beta(L)$ is an invariant of oriented links $L$.
This will be proven in the next section.

\begin{remark}
\label{rmk_smallergradingroup}
For the rest of this remark let $H=\langle A_{x,y}, -B_{x,y} : x,y\in X\rangle\leq R^\times$. Though we've been considering each $C_\beta^k(f)$ to be $R^\times$-graded, each $C_\beta^k(f)$ actually has no nonzero elements of degree $r$ for $r\notin H$. To see this, first note that $q_{x,y}=-B_{x,y}A_{x,y}^{-1}\in H$, so $G= \langle q_{x,y}^{-1}q:x,y\in X\rangle \leq H$. Thus $S=\bbZ[G]$ has no nonzero elements of degree $r$ for $r\notin H$, and so $S=\bbZ[G]$ is actually an $H$-graded ring. $M=S[t]/(t^2)$ with $\deg(1),\deg(t)\in H$, so then $M$ is an $H$-graded $S$-module. Thus any tensor power of $M$ is an $H$-graded $S$-module. Next, $C_\beta^{-n_-}(f)$ is a tensor power of $M$, with degree shifted by
\[
(-1)^{n_-}w^{n_--n_+}\prod_{\tau_+} A_{x_\tau,y_\tau}\prod_{\tau_-} B_{x_\tau,y_\tau}^{-1} = A_{x_0,x_0}^{2n_--2n_+}(-B_{x_0,x_0})^{n_+-n_-}\prod_{\tau_+} A_{x_\tau,y_\tau}\prod_{\tau_-} (-B_{x_\tau,y_\tau})^{-1}\in H
\]
where $\tau_+$ (respectively $\tau_-$) ranges over all positive (respectively negative) crossings of the diagram, and $x_\tau,y_\tau$ are the colors of the strands at the crossing. Thus $C_\beta^{n_-}(f)$ is an $H$-graded $S$-module. By way of induction, suppose that each of the direct summands of $C_\beta^{k}$ are $H$-graded $S$-modules. Then each of the direct summands of $C_\beta^{k+1}(f)$ is a tensor power of $M$ with degree shifted by $-B_{x,y}A_{x,y}^{-1}$ multiplied by the degree shift of one of the direct summands of $C_\beta^k(f)$. Since this shift is in $H$, the direct summands of $C_\beta^{k+1}(f)$ are $H$-graded $S$-modules. Hence $(C^k_\beta(f))$ is a cochain complex of $H$-graded $S$-modules, and so $\cH_\beta(f)$ is a sequence of $H$-graded $S$-modules.
\end{remark}

\begin{example}
\label{ex_khovanovhomology}
    When we take $\beta$ to be any constant biquandle bracket as in example \ref{ex_jones}, we have $q_{x,y} = q$ for all $x,y \in X$.
    Suppose we also started with $R = \bbZ[t,t^{-1}]$ and $q = t$ so that the biquandle bracket value is the same as the Jones polynomial. Since the value of $\beta$ on any link doesn't change upon multiplying each $A_{x,y}$ and $B_{x,y}$ by the same constant \cite{nelson2017quantum}, we may assume $A_{x,y}=1$ and $B_{x,y}=-t$ for all $x,y\in X$. Then, using remark \ref{rmk_smallergradingroup}, $\langle A_{x,y}, -B_{x,y} : x,y\in X\rangle = \langle q\rangle \cong \bbZ$, so $\cH_\beta(f)$ is a sequence of $\bbZ$-graded $\bbZ$-modules.
    Thus, as can be seen by following Khovanov's construction \cite{khovanov2000categorification} or Bar-Natan's paper \cite{bar2002khovanov}, $\cH_\beta(f)$ is the Khovanov homology invariant of $L$, denoted $\Kh(L)$.
    
    Now, if instead of $\bbZ[t,t^{-1}]$ we had started with some other ring $R$, the biquandle bracket value is the Jones polynomial \emph{evaluated} at $q$.
    If $q$ is a torsion element of $R^\times$, then we will not recover the full $\Kh(L)$.
    Indeed, let $k\in \bbN$ be minimal such that $q^k=1$. Then $\cH_\beta(f)$ is $\Kh(L)$ considered as a $\bbZ/k \bbZ$-graded $\bbZ$-module, as described in remark \ref{rmk_quotientgrading}.
    Since $\Kh(L)$ is an invariant of links $L$, the object we obtain will still be invariant of links.
    
    Also, since $\beta$ is a constant biquandle bracket, $\cH_\beta(f)$ will not depend on the coloring $f$ of $L$, only the link itself.
    So $\Bh_\beta(L)$ is a multiset containing a quotient of $\Kh(L)$ with multiplicity equal to the number of $X$-colorings of $L$.
    This shows that $\Bh_\beta(L)$ is an invariant of links when $\beta$ is a constant biquandle bracket.
\end{example}

\begin{remark}
\label{rmk_nottruecategorification}
$\Bh_\beta(L)$ is \emph{not} a true categorification of the biquandle bracket invariant $\Phi_X^\beta(L)$ in the sense that $\Kh(L)$ is a categorification of the Jones polynomial.
This is because the Euler characteristic of $\Bh_\beta(L)$ is actually $\gdim(S) \cdot \Phi_X^\beta(L)$ (this is a slight abuse of notation, since the Euler characteristic of $\Bh_\beta(L)$ is a multiset of formal sums, this $\Phi_X^\beta(L)$ is actually a multiset of finite formal sums that give the usual $\Phi_X^\beta(L)$ when evaluated as in remark \ref{rmk_evaluation}).
Note that $\gdim(S)$ is a formal sum of the elements of $G$.

If $G$ is the trivial group, then $\gdim(S)=1$ and $\Phi_X^\beta(L)$ can be recovered from $\Bh_\beta(L)$. However, if $|G|>1$, then for any $g\in G$ we have $g\cdot \gdim(S)=\gdim(S)$, and so the factor of $\gdim(S)$ cannot be cancelled from the graded Euler characteristic. 

Note that $|G|=1$ iff $q=q_{x,y}$ for all $x,y\in X$, and in \cite{hoffer2019structure} this was shown to be the case iff $\beta$ is the product of a constant bracket and a 2-cocycle.
If $|G|>1$, one can still ask how much information about the biquandle bracket is retained in the Euler characteristic of $\Bh_\beta(L)$.
These questions are strongly related to questions about the canonical biquandle 2-cocycle associated with a biquandle bracket, which will be discussed in the next section.
\end{remark}

\subsection{Proof of Invariance and the Canonical 2-Cocycle}

To prove that $\Bh_\beta(L)$ is an invariant of oriented links $L$ it is sufficient to prove that $\cH_\beta(f)$ is an invariant of $X$-colored oriented links $f$.
For this, we will actually prove that $\cH_\beta(f)$ is isomorphic to a shift of a quotient of $\Kh(L)$, the original Khovanov homology invariant of $L$.
The shift itself turns out to be an invariant of $X$-colored links, and it is obtained from a particular biquandle $2$-cocycle.
In this way, it becomes possible to canonically assign a biquandle $2$-cocycle to an biquandle bracket.
First we need a lemma.

\begin{lemma}
\label{lem_isomorphism}
Let $H,G$ be abelian groups with $\bbZ[G]$ an $H$-graded ring. Suppose also that each $g\in G$ has well defined degree. Let $M$ be an $H$-graded $\bbZ[G]$-module.
Then $M \cong g M$ as $H$-graded $\bbZ[G]$-modules for all $g \in G$.
\end{lemma}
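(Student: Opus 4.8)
The plan is to exhibit an explicit isomorphism and check it respects all the relevant structure. Given $g \in G$, define $\varphi : M \to gM$ by $\varphi(a) = ga$ for $a \in M$. Here I am using that $g$, as an element of the group algebra $\bbZ[G] = S$, acts invertibly on $M$ (its inverse is the action of $g^{-1} \in G \leq \bbZ[G]$). So as a map of underlying $\bbZ[G]$-modules, $\varphi$ is clearly an isomorphism: it is $\bbZ[G]$-linear because multiplication by a ring element commutes with the module action in a commutative setting (or, concretely, because $g$ is central in $\bbZ[G]$), and it is bijective with inverse multiplication by $g^{-1}$.

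The substance of the lemma is that $\varphi$ is degree-preserving once we use the correct grading on $gM$. Recall from Definition \ref{def_Hgradingshift} that $gM$ means the grading shift $M\{\deg(g)\}$, where $\deg(g) \in H$ is the well-defined degree of $g$ guaranteed by hypothesis. First I would verify that $\varphi$ does land in (and surject onto) $gM$: if $a \in M_h$, then since $\bbZ[G]$ is $H$-graded and $g$ has degree $\deg(g)$, we have $ga \in M_{\deg(g)\cdot h}$, i.e. $ga$ has degree $\deg(g) \cdot h$ in $M$, which is exactly degree $h$ in the shifted module $M\{\deg(g)\} = gM$. Thus $\varphi$ sends the degree-$h$ part of $M$ into the degree-$h$ part of $gM$; running the same argument with $g^{-1}$ (which also has well-defined degree $\deg(g)^{-1}$, since $g \cdot g^{-1} = 1 \in \bbZ[G]_{e}$ forces $\deg(g^{-1}) = \deg(g)^{-1}$) shows the inverse map is likewise degree-preserving, so $\varphi$ restricts to isomorphisms $M_h \xrightarrow{\sim} (gM)_h$ for every $h \in H$. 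Assembling these over all $h$ gives the desired isomorphism of $H$-graded $\bbZ[G]$-modules.

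There is no real obstacle here; the only point requiring a moment's care is bookkeeping the two notions of ``degree'' — the degree of $g$ as an element of the ring $\bbZ[G]$ versus the grading shift it induces on $M$ — and confirming that the hypothesis ``each $g \in G$ has well defined degree'' is exactly what is needed to make $\deg(g)$ meaningful. Everything else is the formal observation that multiplication by a unit of the graded ring is a graded isomorphism onto an appropriately shifted copy of the module.
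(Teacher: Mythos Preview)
Your argument is essentially the paper's: both observe that $\deg:G\to H$ is a homomorphism (so $\deg(g^{-1})=\deg(g)^{-1}$) and that multiplication by $g$ is therefore a bijection $M_h\to M_{\deg(g)h}$ on each homogeneous piece, with multiplication by $g^{-1}$ as inverse; reassembling over $h\in H$ gives the isomorphism.

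Two bookkeeping points, though. First, Definition~\ref{def_Hgradingshift} does not define the symbol $gM$; it only introduces the curly-brace shift $M\{h\}$. In the paper $gM$ just denotes the image of the multiplication-by-$g$ map, and the proof verifies that this image, with its graded pieces $gM_h=M_{\deg(g)h}$, reassembles to $M$ after reindexing by the bijection $h\mapsto\deg(g)h$ of $H$. Second, under the paper's shift convention (an element of degree $k$ in $M$ acquires degree $hk$ in $M\{h\}$), your degree check is inverted: if $a\in M_h$ then $ga\in M_{\deg(g)h}$ has degree $\deg(g)^2 h$ in $M\{\deg(g)\}$, not $h$. The shift that would make your map degree-preserving is $M\{\deg(g)^{-1}\}$. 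These are notational wrinkles rather than mathematical gaps; the substance of your argument matches the paper.
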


\begin{proof}
Suppose $M = \bigoplus_{h \in H} M_h$. Since each $g\in G$ has well defined degree, the map $\deg:G\to H$ is a group homomorphism. Thus $\deg(g^{-1})=\deg(g)^{-1}$.
Then the ``multiplication-by-$g$'' map is an isomorphism $M_h \to M_{\deg(g)h}$, since the ``multiplication-by-$g^{-1}$'' map is the inverse.
Then
\[
    g M = \bigoplus_{h \in H} g M_h = \bigoplus_{h \in H} M_{\deg(g)h} \cong \bigoplus_{h' \in H} M_{h'} = M,
\]
since $\deg(g) H = H$ because $H$ is a group.
\end{proof}

Consider the group homomorphism $\bbZ\to R^\times$ defined by $1\mapsto q$ (writing $\bbZ$ additively here). Define $\Kh_\beta(L)$ to be $\Kh(L)$ considered as a sequence of $R^\times$-graded $\bbZ$-modules, as in remark \ref{rmk_quotientgrading}. 
Both $\cH_\beta(f)$ and $\Kh_\beta(L)$ are sequences of homologies of cochain complexes.
The claim is now that $\cH_\beta(f) \cong \Kh_\beta(L)\{Z_\beta(f)\}$, where $Z_\beta(f)$ is an invariant of $X$-colored links $f$.
Moreover, the isomorphism between $\cH_\beta(f)$ and $\Kh_\beta(L)\{Z_\beta(f)\}$ is actually an isomorphism of the cochain complexes they are derived from.
Since both cochain complexes are direct sums of cube diagrams, it suffices to construct an isomorphism of cubes, by which we mean a collection of isomorphisms from each vertex in one cube to the corresponding vertex of the other cube such that the squares thus adjoined to each edge commute.
We henceforth construct the isomorphism of the cube used to construct $\cH_\beta(f)$ with the cube used to construct $\Kh_\beta(L)$, before the final writhe-correcting shift for each cube.

Starting with the cube in the construction of $\cH_\beta(f)$, shift all of the $R^\times$-gradings by the grading shift applied to the left-most vertex i.e. $\prod_{\tau_+} A_{x_\tau,y_\tau}\prod_{\tau_-} B_{x_\tau,y_\tau}^{-1}$.
This yields a cube where the left-most vertex is a tensor power of $M$ with no shift applied, and each other vertex will be a tensor power of $M$ with a shift that can be written as a product of the elements $q_{x,y} \in R^\times$.
The maps in the cube are unchanged by this shift, since the coefficient on a map is the quotient of the shifts on the target and domain spaces.

For each vertex $M^{\otimes k} \{ \prod_{i=1}^j q_{x_i,y_i} \}$ of the shifted cube, the ``multiplication-by-$(\prod_{i=1}^j q^{-1} q_{x_i,y_i})$'' map is an isomorphism $M^{\otimes k} \{ \prod q_{x,y} \} \to M^{\otimes k} \{ q^j \}$ by lemma \ref{lem_isomorphism}.
And these maps assemble into an isomorphism of cubes because each edge in the cube is a map of the form $q_{z,w}^{-1} q \, \Delta$ or $q_{z,w}^{-1} q \, m$, and the following two squares commute:
\begin{center}
\begin{tikzcd}
{M^{\otimes k}\{ \prod_{i=1}^j q_{x_i,y_i} \}} \arrow[rr, "{q_{z,w}^{-1} q \, \Delta}"] \arrow[dd, "{\text{multiplication-by-} \left(\prod_{i=1}^j q^{-1} q_{x_i,y_i} \right)}"'] &  & {M^{\otimes (k+1)} \{q_{z,w} \prod_{i=1}^j q_{x_i,y_i} \}} \arrow[dd, "{\text{multiplication-by-} \left( q^{-1} q_{z,w} \prod_{i=1}^j q^{-1} q_{x_i,y_i} \right)}"] \\
& & \\
M^{\otimes k}\{q^j\} \arrow[rr, "\Delta"]                                                                                                                     &  & M^{\otimes (k+1)} \{ q^{j+1} \}                                                                                                                      
& & \\
{M^{\otimes k}\{ \prod_{i=1}^j q_{x_i,y_i} \}} \arrow[rr, "{q_{z,w}^{-1} q \, m}"] \arrow[dd, "{\text{multiplication-by-} \left(\prod_{i=1}^j q^{-1} q_{x_i,y_i} \right)}"'] & & {M^{\otimes (k-1)} \{q_{z,w} \prod_{i=1}^j q_{x_i,y_i} \}} \arrow[dd, "{\text{multiplication-by-} \left( q^{-1} q_{z,w} \prod_{i=1}^j q^{-1} q_{x_i,y_i} \right)}"] \\
& & \\
M^{\otimes k}\{q^j\} \arrow[rr, "m"]                                                                                                                    & & M^{\otimes (k-1)} \{ q^{j+1} \}                                                                                                                     
\end{tikzcd}
\end{center}
Thus we have an isomorphism from the shifted cube to a cube with only powers of $q$ as shifts and with no coefficients on the maps between vertices.
This is almost exactly the cube in the construction of $\Kh(L)$ (see \cite{khovanov2000categorification} or \cite{bar2002khovanov}).
However, $q$ may be torsion in $R^\times$, and in this cube $M$ is a free $S$-module, as opposed to a free $\bbZ$-module like in the construction of $\Kh(L)$. Let $M_\bbZ$ denote the Frobenius algebra used in the construction of $\Kh_\beta(L)$. Then $M$ and $M_\bbZ$ have the same basis, and so $M=\bigoplus_{g\in G} g\cdot M_\bbZ$. It follows then that the cohomology of this cochain complex is $\bigoplus_{g\in G} g\cdot \Kh_\beta(L)\cong \Kh_\beta(L)\{\gdim(S)\}$, where the isomorphism is given by identifying $g\cdot \Kh_\beta(L)$ with $\Kh_\beta(L)\{g\}$.

Thus, what we truly obtain is an isomorphism between $\cH_\beta(f)$ and $\Kh_\beta(L)$ shifted by
\[
\prod_{\tau_+} A_{x_\tau,y_\tau}\prod_{\tau_-} B_{x_\tau,y_\tau}^{-1},
\]
by $\gdim(S)$, and also by the difference in the respective writhe correction factors. For $\cH_\beta(f)$, this writhe correction factor is again $(-1)^{n_-}w^{n_--n_+}= (-1)^{n_-}(A_{x_0,x_0}q^{-1})^{n_--n_+}$. For $\Kh_\beta(L)$, this factor is $q^{n_+-2n_-}$ (from \cite{khovanov2000categorification}). Thus the difference in these factors is
\[
    (-1)^{n_-}(A_{x_0,x_0}q^{-1})^{n_--n_+}q^{2n_--n_+}= (-1)^{n_-}A_{x_0,x_0}^{n_--n_+}q^{n_-} = A_{x_0,x_0}^{-n_+}B_{x_0,x_0}^{n_-}
\]
Putting all of this together, we arrive at

\begin{theorem}
\label{thm_factorization}
$\cH_\beta(f) \cong \Kh_\beta(L) \{ Z_\beta(f) \}$, where the shift is given by
\[
    Z_\beta(f) = \left( \prod_{\tau^+} A_{x_\tau,y_\tau} A_{x_0,x_0}^{-1} \right) \left( \prod_{\tau^-} B_{x_\tau,y_\tau}^{-1} B_{x_0,x_0} \right) \cdot \gdim(S).
\]
\end{theorem}

Since all of the factors in the two products defining $Z_\beta(f)$ are invertible elements of $R$, and since $\gdim(S)$ is a formal sum of all elements in $G \leq R^\times$, we can view this shift as taking values in the abelian quotient group $R^\times / G$.
To see that $Z_\beta(f)$ is in fact the value of a biquandle 2-cocycle invariant on $f$; consider the map $\phi_\beta : X \times X \to R^\times / G$, defined by $\phi_\beta(x,y) = A_{x,y} A_{x_0, x_0}^{-1} \cdot G$. For any $x\in X$ we have 
\[
    A_{x,x}^2B_{x,x}^{-1} = -w = A_{x_0,x_0}^2B_{x_0,x_0}^{-1}
\]
Thus 
\[
    \phi_\beta(x,x) = A_{x,x}A_{x_0,x_0}^{-1}\cdot G = B_{x,x}A_{x,x}^{-1}A_{x_0,x_0}B_{x_0,x_0}^{-1}\cdot G = q_{x,x}q^{-1}\cdot G = G
\]
And for any $x,y,z\in X$
\begin{align*}
    \phi_\beta(x,y) \cdot \phi_\beta(y,z) \cdot \phi_\beta\left(x\untri y, z\ovtri y\right) &= A_{x_0,x_0}^{-3} A_{x,y} A_{y,z} A_{x \untri y, z \ovtri y} \cdot G \\ &= A_{x_0,x_0}^{-3} A_{x,z} A_{y \ovtri x, z \ovtri x} A_{x \untri z, y \untri z} \cdot G \\ &= \phi_\beta(x,z) \cdot \phi_\beta\left(y\ovtri x,z\ovtri x\right) \cdot \phi_\beta\left(x\untri z, y\untri z\right)
\end{align*}
using biquandle bracket axiom (iii). Thus $\phi_\beta$ is a biquandle 2-cocycle. We call $\phi_\beta$ the \emph{canonical 2-cocycle} associated with $\beta$.

It is clear that on positive crossings $\tau^+$, the value of $Z_\beta(f)$ is derived from $\phi_\beta$.
For negative crossings, we observe that
\[
    A_{x,y} A_{x_0, x_0}^{-1} B_{x,y}^{-1} B_{x_0, x_0} = q_{x,y}^{-1} q \in G
\]
so that $B_{x,y}^{-1} B_{x_0, x_0}$ is the inverse of $A_{x,y} A_{x_0, x_0}^{-1}$, mod $G$. Hence $Z_\beta(f) = \prod_{\tau} \phi_\beta\left(x_\tau,y_\tau\right)^{\epsilon(\tau)}$, and so

\begin{prop}
\label{prop_Zinvariance}
    $Z_\beta(f)$ is a biquandle 2-cocycle invariant of $X$-colored links $f$.
\end{prop}

Since $\Kh_\beta(L)$ is also an invariant of oriented links $L$, this proposition (combined with Theorem 1) shows that $\cH_\beta(f)$ is an invariant of $X$-colored links $f$.
Therefore, we find

\begin{corollary}
\label{cor_invariant}
The multiset $\Bh_\beta(L)$ of colored biquandle bracket homology values is an invariant of oriented uncolored links $L$.
\end{corollary}

\begin{remark}
\label{rmk_canonical}
$\phi_\beta$ is canonical in the sense that it only depends on $\beta$. Namely, $\phi_\beta$ doesn't depend at all on the choice of the distinguished element $x_0\in X$. Though $x_0$ was used in the definition of $G = \langle q_{x,y}^{-1}q : x,y\in X\rangle$, for any $x,y,v,z\in X$ we have
\[
    (q_{x,y}^{-1}q)\left(q_{v,z}^{-1}q\right)^{-1} = q_{x,y}^{-1}q_{v,z}\in G.
\]
Hence $G=\langle q_{x,y}^{-1}q_{v,z} : x,y,v,z\in X\rangle$, and so $G$ does not depend on $x_0$. If $y_0\in X$, then 
\[
    G = \phi_\beta(y_0,y_0) = A_{y_0,y_0} A_{x_0,x_0}^{-1} \cdot G
\]
so $A_{y_0,y_0} \cdot G = A_{x_0,x_0} \cdot G$. Hence $\phi_\beta$ doesn't depend on the choice of $x_0$.
\end{remark}
\begin{remark}
\label{rmk_incomparable}
As with any biquandle 2-cocycle invariant, we can view $\phi_\beta$ as an invariant of uncolored links $L$ by letting $Z_\beta(L)$ denote the multiset of values $Z_\beta(f)$ as $f$ varies across all $X$-colorings of $L$.
A reasonable question to ask is how the power of $Z_\beta(L)$ compares with the power of the original biquandle bracket invariant $\Phi_X^\beta(L)$.
In fact, the strength of these two invariants can differ quite drastically depending on $\beta$, and in general they are incomparable.

For example, if $\beta$ is the Jones polynomial as in example \ref{ex_jones}, then $\phi_\beta$ is trivial, and so $Z_\beta(L)$ is the trivial invariant.
In this case $\Phi_X^\beta(L)$ is stronger than $Z_\beta(L)$.
On the other hand, let $\phi:X^2\to H$ be any 2-cocycle, and let $\beta$ take values in $\left(\bbZ/2\bbZ\right)[H]$ with $A_{x,y}=\phi(x,y)=B_{x,y}$ for all $x,y\in X$.
Then $\phi_\beta = \phi$, but $\delta = 0$. Thus $\beta(f) = 0$ for any $X$-colored link $f$, so $\Phi_X^\beta(L)=\Phi_X^\bbZ(L)$, the biquandle counting invariant.
Meanwhile, $Z_\beta(L)$ is the invariant corresponding to $\phi$.
This is always an enhancement of $\Phi_X^\bbZ(L)$, and depending on $\phi$, is often a strict enhancement. 
\end{remark}

\subsection{An Example Computation}
We will compute explicitly the biquandle bracket homology of a Hopf link with a specific biquandle bracket, and we will also compute the canonical 2-cocycle associated to this biquandle bracket.
These calculations can be done by hand or with the aid of a computer programs, such as the Mathematica packages which can be found at \href{http://www.vilas.us/biquandles/}{vilas.us/biquandles}.
We will work with the following biquandle $X$, which has $2$ elements, (which we will call $a$ and $b$ instead of $1$ and $2$ to avoid confusion with other notation), with the following operation tables (this is the biquandle whose operations ``swap'' the first argument):
\[
    \untri : \begin{bmatrix} b & b \\ a & a \end{bmatrix} \qquad \qquad \ovtri : \begin{bmatrix} b & b \\ a & a \end{bmatrix}
\]

We will consider the $X$-bracket $\beta = (A,B)$ which take values in $R=\mathbb{Z}[t] / (3, 1 + t+ t^2)$, with the following presentation matrix (we use boldface to denote that these elements are in the quotient ring, and for example the element $\mathbf{2}$ of $R$ is not the same as the integer $2$).
\[
    \left[
    \begin{array}{cc|cc}
        \mathbf{1} & \mathbf{2} \mathbf{t} & \mathbf{2} \mathbf{t} & \mathbf{t}  \\
        \mathbf{1} & \mathbf{1} & \mathbf{2} & \mathbf{2} \mathbf{t} \\
    \end{array}
    \right]
\]
Let $x_0 = a \in X$ be the distinguished biquandle element, so that
\[
q = - A_{a,a}^{-1} B_{a,a} = - (\mathbf{1})^{-1} \mathbf{2} \mathbf{t} = - \mathbf{2} \mathbf{t} = \mathbf{t}
\]
One can check that the values of $q_{x,y}$ for every other pair $x,y \in X$ is equal to either $\mathbf{1}$ or $\mathbf{t}$, so the generators of the group $G \leq R^\times$ are $\mathbf{1}^{-1} \mathbf{t} = \mathbf{t}$ and $\mathbf{t}^{-1} \mathbf{t} = \mathbf{1}$. Since $\mathbf{t}^3 = \mathbf{1}$ (by the factorization $t^3-1 = (t-1)(1+t+t^2)$), we have $G = \left\{ \mathbf{1}, \mathbf{t}, \mathbf{t}^2 \right\}$. Thus the ring $S = \bbZ[G] = \bbZ \mathbf{1} \oplus \bbZ \mathbf{t} \oplus \bbZ \mathbf{t}^2$ is a $R^\times$-graded ring, with $\deg(\mathbf{1}) = \mathbf{1}$, $\deg(\mathbf{t}) = \mathbf{t}$, and $\deg(\mathbf{t}^2) = \mathbf{t}^2$.
Therefore, we have
\[
    \gdim(S) = \mathbf{1} + \mathbf{t} + \mathbf{t}^2
\]
which is a formal sum of the elements of the group $R^\times$, consisting of one summand for each element of the group $G$.
Note that $\gdim(S)$ \emph{does not} equal $\mathbf{0}$, since it is not an element of $R$.

Now suppose $L$ is the Hopf link, oriented so that both crossings are negative.
To calculate $\Bh_\beta(L)$, we will invoke Theorem \ref{thm_factorization} for each $X$-coloring of the $L$.
However, for all colorings we will need to determine $\Kh_\beta(L)$, which does not depend on the coloring.
It can be easily computed with computer software that the standard Khovanov Homology of $L$ is the following sequence of $\bbZ$-graded $\bbZ$-modules:
\[
    \begin{tabular}{c | c c c}
      & -2 & -1 & 0 \\
    \hline
    -6 & $\bbZ$ & 0 & 0  \\
    -4 & $\bbZ$ & 0 & 0  \\
    -2 & 0 & 0 & $\bbZ$   \\
    0 & 0 & 0 & $\bbZ$
    \end{tabular}
\]
Each column is one $\bbZ$-graded $\bbZ$-module, and nonzero graded homology groups appear only at indices $-2$ and $0$.
The integers on the left-hand side are the $\bbZ$-gradings of the modules in the sequence.
Now, using the map $\varphi : \bbZ \to R^\times$ given by $1 \mapsto q = \mathbf{t}$, we can consider this as a sequence of $R^\times$-graded $\bbZ$-modules instead.
Now since $\mathbf{t}^{-1} = \mathbf{t}^2$, we have $\mathbf{t}^{-6} = \mathbf{t}^{12} = \mathbf{1} = \mathbf{t}^0$, we have $\mathbf{t}^{-4} = \mathbf{t}^8 = \mathbf{t}^2$, and we have $\mathbf{t}^{-2} = \mathbf{t}^4 = \mathbf{t}$. So we obtain the following sequence of $R^\times$-graded $\bbZ$-modules, and this is the value of $\Kh_\beta(L)$:
\[
    \begin{tabular}{c | c c c}
      & -2 & -1 & 0 \\
    \hline
    $\mathbf{1}$ & $\bbZ$ & 0 & $\bbZ$ \\
    $\mathbf{t}$ & 0 & 0 & $\bbZ$ \\
    $\mathbf{t}^2$ & $\bbZ$ & 0 & 0
    \end{tabular}
\]

Since for each $X$-coloring $f$ of $L$, we have $\mathcal{H}_\beta(f) = \Kh_\beta(L)\{Z_\beta(f)\}$, it just remains to compute $Z_\beta(f)$ for each $X$-coloring $f$ of $L$.
For this, let's first compute $\phi_\beta$, the canonical biquandle $2$-cocycle associated with the biquandle bracket $\beta$.
Then for each coloring $f$ we can obtain $Z_\beta(f)$ as the biquandle $2$-cocycle invariant associated with $\phi_\beta$, computed on the coloring $f$.

Recall that $\phi_\beta(x,y) = A_{x,y} A^{-1}_{x_0,x_0} \cdot G$ for each $x,y \in X$ (note $\phi_\beta$ takes values in the quotient $R^\times / G$).
Since $x_0 = a$ and $A_{a,a} = \mathbf{1}$, this means $\phi_\beta(x,y) = A_{x,y} \cdot G$ for each $x, y \in X$.
Since $\mathbf{2} \mathbf{t} \cdot G = \mathbf{2}\cdot G\neq G$, we have the following presentation matrix for $\phi_\beta$:
\[
    \begin{bmatrix}
        G & \mathbf{2} \cdot G \\
        G & G
    \end{bmatrix}
\]

Now we could examine the four $X$-colorings of $L$ and use the above presentation for $\phi_\beta$ to compute $Z_\beta(f)$ for each of them.
However, this biquandle $2$-cocycle is of the form described in Example \ref{ex_linking}, and the Hopf link is a link with two components.
So, since the linking number of the two components is $1$, the multiset of values of $Z_\beta(f)$ as $f$ varies across all $X$-colorings of $L$ is simply $\{ G, G, \mathbf{2} \cdot G, \mathbf{2} \cdot G \}$.
Now, to turn these into valid shifts we will reinterpret the symbol $G$ as $\gdim(S) = \mathbf{1} + \mathbf{t} + \mathbf{t}^2$, and thus the multiset of shifts becomes
\[
    \left\{ \gdim(S), \gdim(S), \mathbf{2} \gdim(S), \mathbf{2} \gdim(S) \right\}.
\]
Now notice that
\begin{align*}
    \Kh_\beta(L) \{ \gdim(S) \} &= \Kh_\beta(L) \{ \mathbf{1} + \mathbf{t} + \mathbf{t}^2 \} = \Kh_\beta(L) \{ \mathbf{1} \} \oplus \Kh_\beta(L) \{ \mathbf{t} \} \oplus \Kh_\beta(L) \{ \mathbf{t}^2 \} \\
    &= \left( \begin{tabular}{c | c c c}
      & -2 & -1 & 0 \\
    \hline
    $\mathbf{1}$ & $\bbZ$ & 0 & $\bbZ$ \\
    $\mathbf{t}$ & 0 & 0 & $\bbZ$ \\
    $\mathbf{t}^2$ & $\bbZ$ & 0 & 0
    \end{tabular} \right) \oplus \left( \begin{tabular}{c | c c c}
      & -2 & -1 & 0 \\
    \hline
    $\mathbf{t}$ & $\bbZ$ & 0 & $\bbZ$ \\
    $\mathbf{t}^2$ & 0 & 0 & $\bbZ$ \\
    $\mathbf{1}$ & $\bbZ$ & 0 & 0
    \end{tabular} \right) \oplus \left( \begin{tabular}{c | c c c}
      & -2 & -1 & 0 \\
    \hline
    $\mathbf{t}^2$ & $\bbZ$ & 0 & $\bbZ$ \\
    $\mathbf{1}$ & 0 & 0 & $\bbZ$ \\
    $\mathbf{t}$ & $\bbZ$ & 0 & 0
    \end{tabular} \right) \\
    &= \left( \begin{tabular}{c | c c c}
      & -2 & -1 & 0 \\
    \hline
    $\mathbf{1}$ & $\bbZ^2$ & 0 & $\bbZ^2$ \\
    $\mathbf{t}$ & $\bbZ^2$ & 0 & $\bbZ^2$ \\
    $\mathbf{t}^2$ & $\bbZ^2$ & 0 & $\bbZ^2$
    \end{tabular} \right) ,
\end{align*}
and similarly
\[
    \Kh_\beta(L) \{ \mathbf{2} \gdim(S) \} = \left( \begin{tabular}{c | c c c}
      & -2 & -1 & 0 \\
    \hline
    $\mathbf{2}$ & $\bbZ^2$ & 0 & $\bbZ^2$ \\
    $\mathbf{2t}$ & $\bbZ^2$ & 0 & $\bbZ^2$ \\
    $\mathbf{2t}^2$ & $\bbZ^2$ & 0 & $\bbZ^2$
    \end{tabular} \right).
\]
Thus, the value of $\Bh_\beta(L)$, which is the multiset of $\mathcal{H}_\beta(f) = \Kh_\beta(L) \{ Z_\beta(f) \}$, is

\begin{align*}
\left\{
    \left( \begin{tabular}{c | c c c}
      & -2 & -1 & 0 \\
    \hline
    $\mathbf{1}$ & $\bbZ^2$ & 0 & $\bbZ^2$ \\
    $\mathbf{t}$ & $\bbZ^2$ & 0 & $\bbZ^2$ \\
    $\mathbf{t}^2$ & $\bbZ^2$ & 0 & $\bbZ^2$
    \end{tabular} \right) ,
    \left( \begin{tabular}{c | c c c}
      & -2 & -1 & 0 \\
    \hline
    $\mathbf{1}$ & $\bbZ^2$ & 0 & $\bbZ^2$ \\
    $\mathbf{t}$ & $\bbZ^2$ & 0 & $\bbZ^2$ \\
    $\mathbf{t}^2$ & $\bbZ^2$ & 0 & $\bbZ^2$
    \end{tabular} \right),
    \left( \begin{tabular}{c | c c c}
      & -2 & -1 & 0 \\
    \hline
    $\mathbf{2}$ & $\bbZ^2$ & 0 & $\bbZ^2$ \\
    $\mathbf{2t}$ & $\bbZ^2$ & 0 & $\bbZ^2$ \\
    $\mathbf{2t}^2$ & $\bbZ^2$ & 0 & $\bbZ^2$
    \end{tabular} \right),
    \left( \begin{tabular}{c | c c c}
      & -2 & -1 & 0 \\
    \hline
    $\mathbf{2}$ & $\bbZ^2$ & 0 & $\bbZ^2$ \\
    $\mathbf{2t}$ & $\bbZ^2$ & 0 & $\bbZ^2$ \\
    $\mathbf{2t}^2$ & $\bbZ^2$ & 0 & $\bbZ^2$
    \end{tabular} \right)
    \right\}.
\end{align*}

Now, what if we take the graded Euler characteristic (element-wise) of this invariant value?
First of all,
\begin{align*}
    \chi \left( \begin{tabular}{c | c c c}
      & -2 & -1 & 0 \\
    \hline
    $\mathbf{1}$ & $\bbZ^2$ & 0 & $\bbZ^2$ \\
    $\mathbf{t}$ & $\bbZ^2$ & 0 & $\bbZ^2$ \\
    $\mathbf{t}^2$ & $\bbZ^2$ & 0 & $\bbZ^2$
    \end{tabular} \right) = (-1)^{-2} \left( 2 \mathbf{1} + 2 \mathbf{t} + 2 \mathbf{t}^2 \right) + (-1)^0 \left( 2 \mathbf{1} + 2 \mathbf{t} + 2 \mathbf{t}^2 \right) = 4(\mathbf{1} + \mathbf{t} + \mathbf{t}^2) = 4\gdim(S).
\end{align*}
Similarly, we have
\begin{align*}
    \chi \left( \begin{tabular}{c | c c c}
      & -2 & -1 & 0 \\
    \hline
    $\mathbf{2}$ & $\bbZ^2$ & 0 & $\bbZ^2$ \\
    $\mathbf{2t}$ & $\bbZ^2$ & 0 & $\bbZ^2$ \\
    $\mathbf{2t}^2$ & $\bbZ^2$ & 0 & $\bbZ^2$
    \end{tabular} \right) = 4(\mathbf{2} + \mathbf{2t} + \mathbf{2} \mathbf{t}^2) = 4\cdot \mathbf 2 \gdim(S).
\end{align*}
Thus the multiset of graded Euler characteristics is
\[
    \left\{ 4(\mathbf{1} + \mathbf{t} + \mathbf{t}^2), 4(\mathbf{1} + \mathbf{t} + \mathbf{t}^2), 4(\mathbf{2} + \mathbf{2t} + \mathbf{2} \mathbf{t}^2), 4(\mathbf{2} + \mathbf{2t} + \mathbf{2} \mathbf{t}^2) \right\} = \gdim(S) \cdot \left\{ 4\cdot \mathbf 1, 4\cdot \mathbf 1, 4\cdot \mathbf 2, 4\cdot \mathbf 2 \right\}.
\]
After evaluating the formal sums $4\cdot \mathbf 1$ and $4\cdot \mathbf 2$, as mentioned in remark \ref{rmk_evaluation}, we can visually recover that the value of the biquandle bracket invariant $\Phi_X^\beta(L)$ is $\{ \mathbf{1}, \mathbf{1}, \mathbf{2}, \mathbf{2} \}$.
Indeed, this is the correct value of $\Phi_X^\beta(L)$, as can be calculated separately.
However, another perfectly valid way of writing the above multiset is
\[
    \gdim(S) \cdot \{ 4 \cdot \mathbf{t}, 4\cdot \mathbf{t}^2 , 4\cdot \mathbf{2} \mathbf{t}, 4\cdot \mathbf{2} \mathbf{t}^2\},
\]
since $\mathbf{t}, \mathbf{t}^2 \in G$, and $\mathbf{2} \mathbf{t}, \mathbf{2} \mathbf{t}^2 \in \mathbf{2} \cdot G$.
If presented with this description, one might think that the value of the biquandle bracket invariant $\Phi_X^\beta(L)$ is actually $\{ \mathbf{t}, \mathbf{t}^2, \mathbf{2} \mathbf{t}, \mathbf{2} \mathbf{t}^2 \}$, which is incorrect.
This demonstrates, as mentioned in remark \ref{rmk_nottruecategorification}, that we \emph{cannot} consistently recover $\Phi_X^\beta(L)$ from $\Bh_\beta(L)$ in all cases by simply taking the Euler characteristic.
Thus $\Bh_\beta(L)$ is not a true categorification of $\Phi_X^\beta(L)$, at least not in the same way that Khovanov homology is a categorification of the Jones polynomial.

\section{Conclusions and Further Questions}
\label{sec_questions}
The biquandle bracket generalizes the Jones polynomial $J(L)$, which Khovanov homology categorifies.
Our goal was to generalize Khovanov homology to a categorification of biquandle brackets and obtain the invariant $\bigstar$ in the following diagram:
\begin{center}
    \begin{tikzcd}
    \bigstar_\beta(L) \arrow[ddd, "{\text{Take } \beta = \left[ q \mid q^{-1} \right]}"'] \arrow[rrrr, "\text{Take Euler Characteristic}"] &  &  &  &  \Phi_X^\beta(L) \vphantom{\mathcal{H}_\beta(L)} \arrow[ddd, "{\text{Take } \beta = \left[ q \mid q^{-1} \right]}"]  \\
                                                                                     &  &  &  &                                                                                                                           \\
                                                                                     &  &  &  &                                                                                                                           \\
    \Kh(L) \arrow[rrrr, "\text{Take Euler Characteristic}"']  &  &  &  & J(L)                                                                   
    \end{tikzcd}
\end{center}
However, the top arrow in this diagram fails to hold in general with our invariant $\Bh_\beta(L)$:
\begin{center}
    \begin{tikzcd}
    \Bh_\beta(L) \arrow[ddd, "{\text{Take } \beta = \left[ q \mid q^{-1} \right]}"'] \arrow[rrrr, dotted, "\text{Take Euler Characteristic}"] &  &  &  &  \Phi_X^\beta(L) \vphantom{\mathcal{H}_\beta(L)} \arrow[ddd, "{\text{Take } \beta = \left[ q \mid q^{-1} \right]}"]  \\
                                                                                     &  &  &  &                                                                                                                           \\
                                                                                     &  &  &  &                                                                                                                           \\
    \Kh(L) \arrow[rrrr, "\text{Take Euler Characteristic}"']  &  &  &  & J(L)                                                                   
    \end{tikzcd}
\end{center}
This opens up a few questions.

First, how does the invariant $\Bh_\beta(L)$ compare in power to the biquandle bracket invariant $\Phi_X^\beta(L)$?
Specifically, we can ask how the two separate pieces, $\Kh_\beta(L)$ (the quotient of Khovanov homology) and $Z_\beta(L)$ (the invariant obtained from the canonical 2-cocycle associated to $\beta$) compare in power to $\Phi_X^\beta(L)$.

As far as $\Kh_\beta(L)$ goes, it is simply weaker than the previously-known $\Kh(L)$.
And the invariant $Z_\beta(L)$ is actually easier to compute than $\Bh_\beta(L)$.
So the most important comparison to make is between $Z_\beta(L)$ and $\Phi_X^\beta(L)$.
However, as seen in remark \ref{rmk_incomparable}, these two invariants are incomparable in general.
Thus more work remains to be done in comparing $\Bh_\beta(L)$ with $\Phi_X^\beta(L)$.

Second, does the invariant $\bigstar$ exist?
The construction we have outlined seems (to the authors) to be the most natural step away from Khovanov homology toward a categorification of biquandle brackets, but technical limitations prevent the construction from truly categorifying all biquandle brackets.
Is it possible, with more advanced techniques, to create an invariant that simultaneously categorifies every biquandle bracket and generalizes Khovanov homology?

\nocite{elhamdadi2015quandles}
\bibliographystyle{plain}
\bibliography{main}

\end{document}